\date{}
\newtheorem{theorem}{Theorem}
\newtheorem{corollary}[theorem]{Corollary}
\newtheorem{lemma}[theorem]{Lemma}
\numberwithin{equation}{section}
\numberwithin{theorem}{section}
\begin{document}

\title{Gradient estimates for $\Delta_bu+au^{p+1}=0$ on pseudo-Hermitian manifolds}
		\author{ Biqiang Zhao\footnote{
		E-mail addresses: 2306394354@pku.edu.cn}}
		
\maketitle
\setlength{\parindent}{2em}

\begin{abstract}
In this paper, we derive the gradient estimates for the positive solutions of the equation $\Delta_b u+au^{p+1}=0 $ on complete noncompact pseudo-Hermitian manifolds, where $a>0$ and $p\leq 0$ or $a<0$ and $p> 0$ are two constants. As an application, we will obtain a Liouville-type theorem when the manifolds are Sasakian-type with nonnegative pseudo-Hermitian Ricci curvature.
\end{abstract}


\section{Introduction}
\label{1}
   In their classical work, Yau and Cheng \cite{ChY} derived a well-known gradient estimate of positive harmonic functions
   \begin{align}
       \Delta u=0
   \end{align}
   on complete Riemannian manifolds. As a consequence, the Liouville-type theorem holds if $M$ has nonnegative Ricci curvature. It is easy to see that the equation (1.1) can be seen as a special case of 
   \begin{align}
       \Delta u+a(x)u^{p+1}=0,
     \end{align}
    where $p\in \mathbb{R}$. For $a(x)=a<0$ and $p<-1$,  the equation (1.2) on a bounded smooth domain in $\mathbb{R}^n$ is called thin ﬁlm equation, which describes a steady state of the thin film (cf. \cite{GW}). For $p=\frac{4}{n-2}$, the equation (1.2) corresponds to the prescribed scalar curvature equation in $\mathbb{R}^n$. Gidas and Spruck studied the equation (1.2) in \cite{GS} with $a=1 $ and $0\leq p<\frac{4}{n-2}$ when $n>2.$ When $n\geq 4$, Li \cite{Li} studied the gradient estimates of (1.2) and obtained a Liouville theorem when $0\leq p<\frac{2}{n-2}$. Later, Yang \cite{Yyy} derived the gradient estimates of (1.2) when $ p<-1$. Recently, Peng, Wang and Wei \cite{PWW} considered the gradient estimates where  $a>0$ and $p\leq \frac{4}{n}$ or $a<0$ and $p\geq 0$. 
    \par 
     Yau’s estimates have also been generalized in sub-Riemannian geometry. In \cite{CKTL}, Chang et al. derived the gradient estimate for positive pseudoharmonic functions on a complete noncompact pseudo-Hermitian manifold which satisﬁes the CR sub-Laplacian comparison property. In \cite{Ryb}, Ren obtained an explicit gradient estimate of positive eigenfunctions of sub-Laplacian on complete pseudo-Hermitian manifolds. 
     \par
     In this paper. we study the following nonlinear subelliptic equation:
     \begin{align}
         \Delta_bu+au^{p+1}=0
     \end{align}
     on a complete noncompact pseudo-Hermitian $(2m+1)$-manifold. Let $(M^{2m+1},\\HM,
     J,\theta)$ be a complete noncompact pseudo-Hermitian manifold, $ Ric_b,Tor_b $ are defined by 
     \begin{align*}
         Ric_b(X,Y)=R_{\alpha\bar{\beta}}X^\alpha Y^{\bar{\beta}},\quad  Tor_b(X,Y)= \sqrt{-1}( A_{\alpha\beta}X^\alpha Y^\beta -A_{\bar{\alpha}\bar{\beta}}X^{\bar{\alpha}}Y^{\bar{\beta}}).
     \end{align*}
     Here $X=X^\alpha \eta_\alpha,Y=Y^\beta \eta_\beta$ for a frame $\{\eta_\alpha,\eta_{\bar{\alpha}},\xi \}$ of $TM\otimes \mathbb{C}$, $\eta_{\Bar{\alpha}}=\overline{\eta_\alpha} $ and $HM=Re\{T^{1,0}M\oplus T^{0,1}M\}$. $ R_{\gamma\alpha\bar{\beta}}^\delta$  is the pseudo-Hermitian curvature tensor; $R_{\alpha\bar{\beta}}=R_{\gamma\alpha\bar{\beta}}^\gamma$ is the pseudo-Hermitian Ricci curvature tensor; and $A$ is the pseudo-Hermitian torsion tensor.
     \par
     In \cite{HZ}, He and Zhao obtained the gradient estimates of (1.3)  when the solution is bounded below. In \cite{MO}, Ma and Ou obtained a Liouville type theorem on the Heisenberg group $H^m$ when $0<p<\frac{2}{m}$. By studying the methods in \cite{Ryb} and \cite{PWW}, we give the following gradient estimates without the lower bound of the solution.
     \begin{theorem}
         Let $(M^{2m+1},HM,J,\theta)$ be a complete noncompact pseudo-Hermitian manifold with
    \begin{align*}
        Ric_b +2(m-2)Tor_b\geq -k\quad and\quad |A|,|\nabla_b A|\leq k_1\nonumber
    \end{align*}
    for some constants $k,k_1\geq 0$. Assume that $u$ is a smooth positive solution of (1.3).
    \par
    (1) In the case $a>0$ and $p\leq 0$: \\
       there holds true
       \begin{align}
           \frac{|\nabla_b u|^2}{u^2}+au^p+\frac{1}{k_1\epsilon^{-1}+s}\frac{u^2_0}{u^2}\leq  \frac{\lambda_s}{\frac{1}{m}-\epsilon}+\frac{\lambda_s^2}{4(1-\epsilon m)( k_1\epsilon^{-1}+s)}
       \end{align}
        for any $s>0,\epsilon\in (0,\frac{1}{6m})$ and 
        \begin{align*}
            \lambda_s=2k+2\epsilon+16s+16k_1\epsilon^{-1}.
        \end{align*}
     (2)    In the case $a<0$ and $p> 0$:
     \par
      (i)  there holds true in the case $p\geq \frac{1}{m}$
      \begin{align}
           \frac{|\nabla_b u|^2}{u^2}+\frac{1}{k_1\epsilon^{-1}+s}\frac{u^2_0}{u^2}\leq  \frac{\lambda_s}{\frac{1}{m}-\epsilon}+\frac{\lambda^2_s}{4(1-\epsilon m)( k_1\epsilon^{-1}+s)}
       \end{align}
        for any $s>0,\epsilon\in (0,\frac{1}{6m})$ and 
        \begin{align*}
            \lambda_s=2k+2\epsilon+16s+16k_1\epsilon^{-1}.
        \end{align*}
        \par
        (ii) there holds true in the case $0<p< \frac{1}{m}$
        \begin{align}
           \frac{|\nabla_b u|^2}{u^2}+\frac{1}{k_1\epsilon^{-1}+s}\frac{u^2_0}{u^2}\leq  \frac{\lambda_s}{\frac{1}{m}-m(\frac{1}{m}-p)^2-\epsilon}+\frac{\lambda^2_s}{4(1-\epsilon m-(1-mp)^2)( k_1\epsilon^{-1}+s)}
       \end{align}
        for any $s>0,\epsilon\in (0,min\{\frac{1}{6m},\frac{1}{m}-m(\frac{1}{m}-p)^2\})$ and 
        \begin{align*}
            \lambda_s=2k+2\epsilon+16s+16k_1\epsilon^{-1}.
        \end{align*}
     \end{theorem}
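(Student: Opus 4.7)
The plan is to carry out a CR-Bochner / maximum-principle argument of the Yau--Cheng type, adapted to pseudo-Hermitian manifolds in the spirit of Ren \cite{Ryb} and of Peng--Wang--Wei \cite{PWW}, with the extra wrinkle that on a pseudo-Hermitian manifold the sub-Laplacian Bochner formula is not closed on horizontal derivatives: it generates a cross term involving the Reeb derivative $u_0 = Tu$. This forces the auxiliary function to include a $u_0^2/u^2$ piece, whose coefficient $(k_1\epsilon^{-1}+s)^{-1}$ already appears in (1.4)--(1.6).

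First I would pass to $f = \log u$, so that (1.3) becomes $\Delta_b f = -|\nabla_b f|^2 - a u^p$, and define an auxiliary function of the form
\begin{align*}
H \; = \; |\nabla_b f|^2 \; + \; \sigma\, a u^p \; + \; \frac{1}{k_1\epsilon^{-1}+s}\,\frac{u_0^2}{u^2},
\end{align*}
with $\sigma=1$ in case (1), and $\sigma=0$ in the two subcases of (2) where the $au^p$-term has the wrong sign. Next I would apply the CR-Bochner formula
\begin{align*}
\tfrac{1}{2}\Delta_b |\nabla_b f|^2 \;\geq\; \tfrac{1}{m}(\Delta_b f)^2 + \langle \nabla_b f,\nabla_b\Delta_b f\rangle + (2 Ric_b + 4(m-2)Tor_b)(\nabla_b f,\nabla_b f) - 4\langle J\nabla_b f,\nabla_b f_0\rangle,
\end{align*}
together with an analogous identity $\tfrac{1}{2}\Delta_b(f_0^2) \geq (\nabla_b f_0 \text{-terms}) + \text{torsion terms}$, to produce a differential inequality $\Delta_b H \geq \Phi(H) + (\text{lower-order terms})$ in which $\Phi(H)$ is a quadratic lower bound on $H$. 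The role of the $u_0^2/u^2$ summand is precisely to absorb the troublesome cross term $-4\langle J\nabla_b f,\nabla_b f_0\rangle$ by Cauchy--Schwarz with weight $k_1\epsilon^{-1}+s$; the parameter $s$ controls the $|\nabla_b f_0|^2$ bound while $k_1\epsilon^{-1}$ controls the torsion contribution to $\Delta_b(f_0^2)$. Using $\Delta_b f = -|\nabla_b f|^2 - a u^p$ and expanding $(\Delta_b f)^2$ as $(|\nabla_b f|^2 + au^p)^2$ yields, in each case, a nonnegative-definite quadratic form in $|\nabla_b f|^2$ and $a u^p$ whose coefficient matches the $\frac{1}{m}-\epsilon$ and $\frac{1}{m}-m(\frac{1}{m}-p)^2-\epsilon$ denominators in (1.4)--(1.6); the splitting of case (2) into $p\geq 1/m$ versus $0<p<1/m$ is exactly where this quadratic form fails to remain nonnegative and a $p$-dependent completion of the square is required.

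The last step is the localization: fix $x_0\in M$, choose a sub-Riemannian cut-off $\phi$ supported in the Carnot--Carath\'eodory ball $B(x_0,2R)$ with the usual estimates $|\nabla_b\phi|^2 \leq C\phi/R^2$, $\Delta_b \phi \geq -C/R^2$, $|\phi_0|\leq C/R^2$ from \cite{Ryb,CKTL}, and look at a maximum point $x^*$ of $\phi H$. At $x^*$ one has $\nabla_b(\phi H)=0$ and $\Delta_b(\phi H)\leq 0$, which together with the differential inequality for $H$ gives a quadratic-in-$H(x^*)$ inequality of the form $\alpha H^2 - \lambda_s H - (\text{curvature/torsion/}R\text{-terms})\leq 0$. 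Solving this quadratic and then sending $R\to\infty$ at a fixed point $x$ yields (1.4), (1.5) or (1.6) according to the case.

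The principal obstacle is the bookkeeping of the cross terms between $\nabla_b f$, $f_0$ and the torsion tensor $A$: one has to choose the weights $\sigma,\ s,\ \epsilon$ and $k_1\epsilon^{-1}$ so that (i) the $-4\langle J\nabla_b f,\nabla_b f_0\rangle$ and the $A,\nabla_b A$ terms produced by the Reeb-direction Bochner identity are absorbed without eating up the good $\frac{1}{m}(\Delta_b f)^2$ term, and (ii) in case (2)(ii), the additional loss $m(\frac{1}{m}-p)^2$ coming from the mismatch between the $u^p$ coefficient $p$ and the Bochner constant $\frac{1}{m}$ is compensated by the $(1-mp)^2$ adjustment of the $u_0^2/u^2$ coefficient. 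Once these algebraic coefficients are pinned down, the rest of the argument is routine CR maximum principle.
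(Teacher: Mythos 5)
Your proposal follows essentially the same route as the paper: the substitution $w=\log u$, an auxiliary function combining $|\nabla_b w|^2$, the $au^p$ term (included only in case (1), dropped in case (2)), and a weighted $u_0^2/u^2$ term with weight $(k_1\epsilon^{-1}+s)^{-1}$ to absorb the Reeb-direction cross terms from the CR Bochner formula, followed by a cut-off maximum-principle argument and, in case (2)(ii), the completion of the square producing the loss $m(\tfrac{1}{m}-p)^2$. The only detail you gloss over is that the coefficient of the $Q_0$-term at the maximum point involves $\mu\frac{C}{R}P_0^{1/2}$, so the paper first runs the argument with a $\max P$-dependent choice of $\mu$ to get a uniform bound on $P$ before re-choosing $\mu^{-1}=k_1\epsilon^{-1}+s+\tfrac{C}{R}$ to extract the sharp constants; this is a technical bootstrap within your stated ``routine bookkeeping.''
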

     If $M$ is a Sasakian manifold, then the term $\epsilon^{-1}$ vanish. Let $\epsilon\xrightarrow{} 0$, we can obtain the following corollary.
     \begin{corollary}
         Let $(M^{2m+1},HM,J,\theta)$ be a complete noncompact Sasakian manifold with
    \begin{align*}
        Ric_b\geq -k\nonumber
    \end{align*}
    for some constant $k\geq 0$. Assume that $u$ is a smooth positive solution of (1.3).
    \par
    (1) In the case $a>0$ and $p\leq 0$: \\
       there holds true
       \begin{align*}
           \frac{|\nabla_b u|^2}{u^2}+au^p+\frac{1}{s}\frac{u^2_0}{u^2}\leq 2m(2k+8s)+(k+8s)^2 s^{-1}
       \end{align*}
        for any $s>0$.
        \\
     (2)    In the case $a<0$ and $p> 0$:
     \par
      (i)  there holds true in the case $p\geq \frac{1}{m}$
      \begin{align*}
           \frac{|\nabla_b u|^2}{u^2}+\frac{1}{s}\frac{u^2_0}{u^2}\leq  2m(2k+8s)+(k+8s)^2 s^{-1}
       \end{align*}
        for any $s>0$.
        \par
        (ii) there holds true in the case $0<p< \frac{1}{m}$
        \begin{align*}
           \frac{|\nabla_b u|^2}{u^2}+\frac{1}{s}\frac{u^2_0}{u^2}\leq  \frac{2k+16s}{\frac{1}{m}-m(\frac{1}{m}-p)^2}+\frac{(k+8s)^2 s^{-1}}{1-(1-mp)^2}
       \end{align*}
        for any $s>0$.
     \end{corollary}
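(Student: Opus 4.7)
The plan is to derive Corollary 1.2 as a direct specialization of Theorem 1.1 to the Sasakian setting. The defining feature of a Sasakian pseudo-Hermitian manifold is that its pseudo-Hermitian torsion $A$ vanishes identically. Consequently, in applying Theorem 1.1 I may take $k_1=0$ (since $|A|=|\nabla_b A|=0$), and the torsion contribution $Tor_b$ drops out, so the hypothesis $Ric_b+2(m-2)Tor_b\geq -k$ reduces to $Ric_b\geq -k$ as assumed in the corollary.

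After substituting $k_1=0$ into the estimates (1.4)--(1.6), every occurrence of $k_1\epsilon^{-1}$ disappears: the quantity $\lambda_s$ reduces to $2k+2\epsilon+16s$, and the coefficient $1/(k_1\epsilon^{-1}+s)$ on the left-hand side becomes $1/s$, which is independent of $\epsilon$. The left-hand side is therefore fixed, while the right-hand side is a continuous function of $\epsilon$ on its admissible interval. Since Theorem 1.1 furnishes the inequality for every admissible $\epsilon>0$, I may take the infimum over $\epsilon$, equivalently pass to the limit $\epsilon\to 0^+$. In this limit $\lambda_s\to 2k+16s$, the denominators $\frac{1}{m}-\epsilon$ and $1-\epsilon m$ converge to $\frac{1}{m}$ and $1$ respectively, and in case (ii) $\frac{1}{m}-m(\frac{1}{m}-p)^2-\epsilon$ and $1-\epsilon m-(1-mp)^2$ converge to their $\epsilon$-free analogues. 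Performing the elementary algebraic clean-up of the resulting expression yields exactly the three stated bounds.

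The only minor point to check is that the admissible interval for $\epsilon$ remains nonempty as $\epsilon\to 0^+$. In cases (1) and (2)(i) this is automatic. In case (2)(ii) it requires $\frac{1}{m}-m(\frac{1}{m}-p)^2>0$, equivalently $(1-mp)^2<1$, which holds automatically under the standing hypothesis $0<p<\frac{1}{m}$ since then $0<1-mp<1$. No substantive obstacle arises; the corollary is simply the $k_1=0$, $\epsilon\to 0^+$ limiting case of Theorem 1.1, and the entire argument rests on the fact that the $\epsilon^{-1}$ singularity on the right-hand side is present only through the factor $k_1\epsilon^{-1}$, which is precisely what vanishes in the Sasakian case.
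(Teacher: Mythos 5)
Your proposal is correct and is precisely the paper's argument: the paper derives Corollary 1.2 from Theorem 1.1 by observing that Sasakian means $\tau=0$, hence $k_1=0$ and $Tor_b=0$, so the $\epsilon^{-1}$ terms drop and one may let $\epsilon\to 0^+$ since the left-hand side no longer depends on $\epsilon$. (Minor note: in cases (1) and (2)(i) the limit actually yields the slightly sharper first term $2m(k+8s)$ rather than the stated $2m(2k+8s)$, so your computation implies the corollary as written.)
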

      As an application, we obtain the following Liouville-type theorem.
      \begin{corollary}
           Let $(M^{2m+1},HM,J,\theta)$ be a complete noncompact Sasakian manifold with nonnegative pseudo-Hermitian Ricci curvature. Then equation (1.3) with $a>0$ and $p\leq 0$ or $a<0$ and $p> 0$ does not admit any positive solution.
      \end{corollary}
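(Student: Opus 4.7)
My plan is to derive the corollary directly from Corollary 1.2 by letting the free parameter $s$ tend to $0$ at an arbitrary point. Since $M$ is Sasakian with nonnegative pseudo-Hermitian Ricci curvature, I can take $k=0$ in each of the three bounds, and then every right-hand side is a finite sum of terms of order $O(s)$ as $s\to 0^{+}$. Because the estimates hold pointwise for every $s>0$, I freeze an arbitrary $x\in M$ and pass to the limit.

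I will handle the three cases separately. In Case (1), as $s\to 0$ the estimate becomes $|\nabla_b u|^2/u^2 + au^p + s^{-1}u_0^2/u^2 \le 16ms + 64s \to 0$; since $a>0$ and $u>0$ force $au^p>0$ everywhere on $M$, this is an immediate contradiction. In Cases (2)(i) and (2)(ii), the right-hand side again tends to $0$ (in (ii) I must first verify that the denominators $\tfrac{1}{m}-m(\tfrac{1}{m}-p)^2$ and $1-(1-mp)^2$ are strictly positive, both of which follow from the hypothesis $0<p<\tfrac{1}{m}$). Taking the limit forces $|\nabla_b u|^2/u^2 = 0$, and moreover $s^{-1}u_0^2/u^2 \le Cs$ gives $u_0^2/u^2 = O(s^2) \to 0$; hence $\nabla_b u = 0$ and $\xi u = u_0 = 0$ at every point. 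Using the decomposition $TM = HM \oplus \mathbb{R}\xi$, this yields $du\equiv 0$, so $u$ is a positive constant on the connected manifold $M$; substituting into $(1.3)$ then gives $au^{p+1}=0$, contradicting $a\neq 0$ and $u>0$.

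The main technical work, namely proving the gradient estimates themselves, has already been carried out in Theorem 1.1 and its Sasakian specialization Corollary 1.2, so this corollary reduces to a short limiting argument. The only subtle checks are (a) the strict positivity of the two denominators in Case (2)(ii) after setting $k=0$, and (b) ensuring that the $s^{-1}u_0^2/u^2$ contribution vanishes in the limit rather than blowing up, which relies on the right-hand side decaying like $O(s)$ so that $u_0$ is squeezed to zero at rate $O(s^2)$. I expect both to take only a few lines.
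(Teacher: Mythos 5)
Your proposal is correct and follows exactly the route the paper intends (the paper states Corollary 1.3 as an immediate application of Corollary 1.2 without writing out the details): set $k=0$, observe the right-hand sides are $O(s)$, and let $s\to 0^{+}$ pointwise, getting a direct contradiction from $au^{p}>0$ in case (1) and forcing $\nabla_b u=0$, $u_0=0$, hence $u$ constant and $au^{p+1}=0$, in case (2). Your two flagged checks (positivity of the denominators for $0<p<\tfrac1m$, and the $O(s^2)$ squeeze on $u_0^2/u^2$) both go through as you describe.
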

      Throughout this paper, we use Einstein convention, i.e. repeated index implies summation.

      \section{Preliminaries}
      In this section, we introduce some basic materials in pseudo-Hermitian geometry (see \cite{DT,We} for more details) and gives some formulas.
      \par
      A real $2m + 1$ dimensional orientable $C^\infty$ manifold $M^{2m+1}$ is said to be a CR manifold if there exists a rank $m$ complex subbundle $T^{1,0}M $ of $TM\otimes \mathbb{C}$ satisfying 
\begin{equation}
T^{1,0}M\cap T^{0,1}M=\{0\} ,\ \  [\Gamma(T^{1,0}M),\Gamma(T^{1,0}M)]\subseteq \Gamma(T^{1,0}M),   
\end{equation}
where $T^{0,1}M=\overline{T^{1,0}M}$. Equivalently, the CR structure can be described by the real bundle $HM=Re\{T^{1,0}M\oplus T^{0,1}M\}$ and an almost complex structure $J$ on $HM$, where $J(X+\overline{X})=\sqrt{-1}(X-\overline{X})$ for any $X\in T^{1,0}M$. A global nowhere vanishing 1-form $\theta$ is called a pseudo-Hermitian structure on $M$. The orientability of $M$ ensures that such $\theta$ always exists. The Levi form $L_\theta$ of a pseudo-Hermitian structure $\theta$ is given by
\begin{eqnarray}
L_\theta(X,Y)=d\theta(X,JY) \nonumber
\end{eqnarray}
for any $X,Y\in HM$. A CR manifold $(M, HM, J,\theta)$ is said to be strictly pseudoconvex if $L_\theta$ is positive definite on $HM$.  Then the quadruple $(M, HM, J, \theta)$ is called a pseudo-Hermitian manifold. In particular, there exits a Reeb vector file $\xi$ on $(M, HM, J, \theta)$ such that $\theta(\xi)=1,\ d\theta(\xi,\cdot)=0,  $ which induces a direct sum decomposition on $TM=HM \oplus \mathbb{R}\xi$.  This allows us to define a Riemannian metric
\begin{eqnarray}
g_\theta=L_\theta+\theta\otimes \theta .\nonumber
\end{eqnarray} 
       \par
       It is known that there exists a canonical connection $\nabla$ on the pseudo-Hermitian manifold, called the Tanaka-Webster connection, such that $\nabla$ preserving the horizontal bundle, the CR structure and the Webster metric. Moreover, its torsion satisfies
\begin{align*}
    T_\nabla(X,Y)=2d\theta(X,Y)\xi\ ,\quad\ T_\nabla(\xi,JX)+JT_\nabla(\xi,X)=0,
\end{align*}
for $X,Y\in TM$. Here we extend $J$ to an endomorphism of $TM$ by requiring that $J\xi=0$. The pseudo-Hermitian torsion of $\nabla$ is defined by
$\tau(X)=T_\nabla (\xi,X)$ for any $X\in TM$. Set $A(X,Y)=T_\nabla(\tau(X),Y)$ for any $X,Y\in TM$. We say that $M$ is Sasakian if $\tau=0$. 
\par
    Suppose that $\{\eta_\alpha\}_{\alpha=1}^m$ is a local unitary frame field of $T^{1,0}M$ and $\{ \theta^1,\cdots,\theta^m\}$ be the dual frame field of $\{\eta_\alpha \}_{\alpha=1}^m$. Then we have the following structure equations for the Tanaka-Webster connection $\nabla$ (cf. \cite{We}):
\begin{eqnarray}
d\theta&=&2\sqrt{-1}\theta^\alpha \wedge \theta^{\bar{\alpha}}, \nonumber\\
d\theta^{\alpha}&=&\theta^\beta\wedge \theta^\alpha_\beta+A_{\bar{\alpha}\bar{\beta}}\theta\wedge\theta^\beta,\\
d\theta^\alpha_\beta&=&\theta^\gamma_\beta\wedge\theta^\alpha_\gamma+\Pi^\alpha_\beta \nonumber
\end{eqnarray}
with
\begin{eqnarray}
\Pi^\alpha_\beta=2\sqrt{-1}(\theta^\alpha\wedge\tau^{\bar{\beta}}-\tau^\alpha \wedge \theta^{\bar{\beta}})+R^\alpha_{\beta\lambda\bar{\mu}}\theta^\lambda\wedge\theta^{\bar{\mu}}+W^\alpha_{\beta\bar{\gamma}}\theta\wedge\theta^{\bar{\gamma}}-W^\alpha_{\beta\gamma}\theta\wedge\theta^{{\gamma}}, \nonumber
\end{eqnarray}   
where $W^\alpha_{\beta\bar{\gamma}}=A^\alpha_{\bar{\gamma}, \beta},\ W^\alpha_{\beta\gamma}=A^{\bar{\gamma}}_{\beta,\bar{\alpha}}$ are the covariant derivatives of $A$, and $R_{\beta\lambda\bar{\mu}}^\alpha$ are the components of curvature tensor. 
\par
  For a smooth function $v$, its gradient $\nabla v$ can be expressed as
\begin{eqnarray}
\nabla v=v_0\xi+ v_{\bar{\alpha}}\eta_\alpha+v_\alpha \eta_{\bar{\alpha}} ,\nonumber
\end{eqnarray}
where $v_0=\xi(v),v_\alpha=\eta_\alpha(v),v_{\bar{\alpha}}=\eta_{\bar{\alpha}}(v)$. Then the horizontal gradient of $v$ is defined by
\begin{eqnarray}
\nabla_b v=v_{\bar{\alpha}}\eta_\alpha+v_\alpha \eta_{\bar{\alpha}} .\nonumber
\end{eqnarray}
The sub-Laplacian of a smooth function $v$ is defined by 
\begin{align*}
    \Delta_b v= trace_{G_\theta}\nabla_bd_b v,
\end{align*}
where $ \nabla_bd_b v$ is the restriction of $\nabla d v$ on $HM\times HM$. In particular
\begin{align*}
    |\nabla_b v|^2=2v_\alpha v_{\bar{\alpha}}, \quad  | \nabla_bd_b v|^2=2(v_{\alpha\beta}v_{\bar{\alpha}\bar{\beta}}+v_{\alpha\bar{\beta}}v_{\bar{\alpha}\beta}) .
\end{align*}
Let us recall the following CR Bochner formula.
 \begin{lemma} (cf. \cite{Ryb})
     Let $(M^{2m+1},HM,J,\theta)$ be a pseudo-Hermitian manifold with
     \begin{align*}
        Ric_b +2(m-2)Tor_b\geq -k\quad and\quad |A|,|\nabla_b A|\leq k_1 ,\nonumber
    \end{align*}
    for some constants $k,k_1\geq 0$. Then for any smooth function $v$ and any $\epsilon_1>0$, we have
    \begin{align}
        \Delta_b|\nabla_b v|^2\geq & \frac{1}{m}(\Delta_b v)^2+4mv^2_0+2|\pi_{1,1}^\perp \nabla_b d_b v|^2
              -2\langle \nabla_b \Delta_b v,
               \nabla_b v \rangle
        \nonumber\\
        & -\epsilon_1|\nabla_b v_0^2|-(2k+16\epsilon_1^{-1})|\nabla_b v|^2 
    \end{align}
     and
     \begin{align}
         \Delta_bv^2_0\geq & 2|\nabla_b v_0|^2-2\langle \nabla_\xi v,\nabla_\xi \Delta_b v\rangle-2k_1 |\pi_{1,1}^\perp \nabla_b d_b v|^2 \nonumber
         \\
         &-4k_1|v_0|^2-2k_1|\nabla_b v|^2,
     \end{align}
     where $|\pi_{1,1}^\perp \nabla_b d_b v|^2=2v_{\alpha\beta}v_{\Bar{\alpha}\Bar{\beta}} $.
 \end{lemma}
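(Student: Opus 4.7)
The plan is to derive both inequalities by direct computation in a local unitary frame $\{\eta_\alpha\}$, using the structure equations (2.2) together with the curvature and torsion hypotheses, and then absorbing indefinite cross terms via Cauchy--Schwarz with the free parameter $\epsilon_1$. The starting point is the standard CR Bochner identity for horizontal gradients; the refinements needed are a sharp trace inequality that isolates $\tfrac{1}{m}(\Delta_b v)^2$ while leaving $|\pi^\perp_{1,1}\nabla_b d_b v|^2$ as a separate nonnegative remainder, and a careful tracking of the $\xi$-direction contributions so that the term $4m\,v_0^2$ appears with the correct multiplicity.

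For (2.3), I would first expand $\Delta_b|\nabla_b v|^2 = \Delta_b(2v_\alpha v_{\bar\alpha})$ and commute the resulting horizontal covariant derivatives. The bracket $[\eta_\alpha,\eta_{\bar\beta}] = -2\sqrt{-1}\delta_{\alpha\beta}\xi + (\text{connection terms})$, combined with the Ricci identities derived from (2.2), produces the Ricci term $R_{\alpha\bar\beta}v^\beta v^{\bar\alpha}$, the torsion combination $2(m-2)\mathrm{Tor}_b(\nabla_b v,\nabla_b v)$, the contribution $4m\,v_0^2$ obtained by summing the Reeb brackets over $\alpha=1,\dots,m$, and indefinite cross terms coupling $v_0$ to $\nabla_b v$. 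Next, split the Hessian norm as $|\nabla_b d_b v|^2 = 2v_{\alpha\bar\beta}v_{\bar\alpha\beta} + |\pi^\perp_{1,1}\nabla_b d_b v|^2$ and apply the matrix Cauchy--Schwarz inequality $2v_{\alpha\bar\beta}v_{\bar\alpha\beta} \geq \tfrac{1}{m}(\Delta_b v)^2$ to the $(1,1)$-part, retaining the $(2,0)+(0,2)$-part intact. Finally, absorb the indefinite $v_0$ cross terms via $|ab| \leq \tfrac{\epsilon_1}{2}a^2 + \tfrac{1}{2\epsilon_1}b^2$, producing the $-\epsilon_1|\nabla_b v_0|^2 - 16\epsilon_1^{-1}|\nabla_b v|^2$ contribution, which together with the curvature hypothesis $\mathrm{Ric}_b + 2(m-2)\mathrm{Tor}_b \geq -k$ yields (2.3).

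For (2.4), I would begin with the pointwise identity $\tfrac12\Delta_b v_0^2 = |\nabla_b v_0|^2 + v_0\,\Delta_b v_0$ and then expand $\Delta_b v_0$ by commuting $\Delta_b$ past $\xi$. Using (2.2), this commutator takes the schematic form $\Delta_b v_0 = \xi(\Delta_b v) + L(\nabla_b d_b v) + M(\nabla_b v)$, where $L$ is linear in the $(2,0)+(0,2)$-block of the Hessian with coefficients coming from $A$ and $M$ depends linearly on $\nabla_b A$; the hypotheses $|A|,|\nabla_b A|\leq k_1$ give $|L|\leq k_1|\pi^\perp_{1,1}\nabla_b d_b v|$ and $|M|\leq k_1|\nabla_b v|$. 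Multiplying by $v_0$ and applying Cauchy--Schwarz tuned to produce the specific coefficients $2k_1$ and $4k_1$ delivers (2.4).

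The main obstacle is the bookkeeping of signs and multiplicities required to obtain \emph{exactly} the combination $\mathrm{Ric}_b + 2(m-2)\mathrm{Tor}_b$ and the precise coefficient $4m$ on $v_0^2$: a misplaced factor of $\sqrt{-1}$ or a dropped conjugation shifts the torsion coefficient, and the sharp $\tfrac{1}{m}$ constant on $(\Delta_b v)^2$ is only recovered when Cauchy--Schwarz is applied exclusively to the $(1,1)$-block of the Hessian. Moreover, the $(2,0)+(0,2)$-remainder $|\pi^\perp_{1,1}\nabla_b d_b v|^2$ must be kept explicit in (2.3), because it reappears as the source term $L$ in the commutator computation for (2.4); the two identities therefore have to be derived in a uniform convention so that they can later be combined with compatible signs.
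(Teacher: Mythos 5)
The paper does not actually prove this lemma: it is quoted (up to notation) from Ren \cite{Ryb}, so there is no in‑paper argument to compare against. Your sketch reconstructs what is essentially the standard derivation behind that citation --- the Greenleaf‑type CR Bochner identity, the commutation relations coming from the structure equations (2.2), Cauchy--Schwarz on the $(1,1)$‑block of the horizontal Hessian, and absorption of the torsion contributions using $|A|,|\nabla_bA|\le k_1$. Structurally the plan is sound, and your treatment of (2.4) (writing $\tfrac12\Delta_bv_0^2=|\nabla_bv_0|^2+v_0\Delta_bv_0$, commuting $\Delta_b$ past $\xi$, and estimating the $A$‑ and $\nabla_bA$‑terms by Cauchy--Schwarz) does deliver the coefficients $2k_1$, $4k_1$, $2k_1$ as claimed.

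Two points in your outline would not survive being written out as stated. First, the inequality $2v_{\alpha\bar\beta}v_{\bar\alpha\beta}\ge\frac1m(\Delta_bv)^2$ is false with that constant, and $4m\,v_0^2$ does not appear as a separate additive contribution ``from summing the Reeb brackets.'' The actual mechanism is that the commutation relation gives $v_{\alpha\bar\alpha}=\tfrac12\Delta_bv+\sqrt{-1}\,m v_0$ (up to sign convention), so the single Cauchy--Schwarz step $\sum_{\alpha,\beta}|v_{\alpha\bar\beta}|^2\ge\frac1m|v_{\alpha\bar\alpha}|^2=\frac1{4m}(\Delta_bv)^2+m v_0^2$, multiplied by the overall factor $4$ coming from $\Delta_b|\nabla_bv|^2=2|\nabla_bd_bv|^2+\cdots$ together with $|\nabla_bd_bv|^2=2(v_{\alpha\beta}v_{\bar\alpha\bar\beta}+v_{\alpha\bar\beta}v_{\bar\alpha\beta})$, produces $\frac1m(\Delta_bv)^2$ and $4m v_0^2$ \emph{simultaneously}; bookkeeping them as two independent contributions either loses a factor of $2$ on $(\Delta_bv)^2$ or double counts the source of $v_0^2$. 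Second, the identity $\Delta_bv_0^2=2|\nabla_bv_0|^2+2v_0\,\xi(\Delta_bv)+2v_0[\Delta_b,\xi]v$ that your argument yields carries the term $+2\langle\nabla_\xi v,\nabla_\xi\Delta_bv\rangle$, whereas (2.4) as displayed has a minus sign; you need to reconcile this with the paper's conventions before combining (2.3) and (2.4) in the later lemmas (and note that the displayed $-\epsilon_1|\nabla_bv_0^2|$ in (2.3) is evidently intended as $-\epsilon_1|\nabla_bv_0|^2$, which is what your absorption step produces). Neither issue is fatal, but both must be settled for the precise constants in (2.3)--(2.4) to be claimed.
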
 
     For our purpose, we will use the Riemannian distance $r$ to construct cut-off function. Hence we introduce a sub-Laplacian comparison theorem.
     \begin{theorem} (\cite{CDRZ})
         Let $(M^{2m+1},HM,J,\theta)$ be a pseudo-Hermitian manifold with
    \begin{align*}
        Ric_b +2(m-2)Tor_b\geq -k\quad and\quad |A|,|\nabla_b A|\leq k_1\nonumber
    \end{align*}
    for some $k,k_1\geq 0$. Then there exists a constant $C$ only depending on $m$ such that
    \begin{align*}
        \Delta_b r\leq C(\frac{1}{r}+\sqrt{1+k+k_1+k_1^2})
    \end{align*}
    outside the cut locus of $x_0$, where $r$ is the Riemannian distance from some point $x$ to a fixed point $O$.
     \end{theorem}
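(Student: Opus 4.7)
The plan is to reduce the claim to the classical Riemannian Hessian (or Laplacian) comparison theorem applied to the Webster metric $g_\theta=L_\theta+\theta\otimes\theta$, and then to convert the horizontal trace of the Levi-Civita Hessian of $r$ into the sub-Laplacian $\Delta_b r$ by controlling the connection-difference tensor between the Levi-Civita and Tanaka-Webster connections. Since the distance $r$ in the statement is already measured with respect to $g_\theta$, this reduction is the natural one.

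The first step is to bound the Ricci curvature of $(M,g_\theta)$ from below in pseudo-Hermitian terms. Using Koszul's formula together with the Tanaka-Webster torsion identities $T_\nabla(X,Y)=2d\theta(X,Y)\xi$ for $X,Y\in HM$ and $T_\nabla(\xi,X)=\tau(X)$, one verifies that the difference tensor $D(X,Y):=\nabla^{LC}_X Y-\nabla_X Y$ is an algebraic expression in $J$, $d\theta$, and $A$, bounded pointwise by $C_m(1+|A|)$. Expanding the Riemann curvature of $\nabla^{LC}$ as that of $\nabla$ plus terms linear in $\nabla D$ and quadratic in $D$, and then tracing, one arrives at
\begin{align*}
\mathrm{Ric}^{LC}(X,X)\geq Ric_b(X,X)+2(m-2)Tor_b(X,X)-C_m(1+|A|^2+|\nabla_b A|)\,|X|^2
\end{align*}
for horizontal unit $X$, together with an analogous estimate for $X=\xi$. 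Combined with the hypotheses this yields $\mathrm{Ric}^{LC}\geq -Kg_\theta$ with $K:=C_m(1+k+k_1+k_1^2)$.

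Next, the Hessian comparison theorem on $(M,g_\theta)$ gives, outside the cut locus of $O$,
\begin{align*}
\mathrm{Hess}^{LC}(r)(X,X)\leq \sqrt{K}\coth(\sqrt{K}\,r)\bigl(|X|^2-\langle X,\nabla r\rangle_{g_\theta}^2\bigr).
\end{align*}
Applying this to each vector of a $g_\theta$-orthonormal horizontal frame $\{e_1,\ldots,e_{2m}\}\subset HM$ and summing produces the crude but sufficient bound
\begin{align*}
\sum_{i=1}^{2m}\mathrm{Hess}^{LC}(r)(e_i,e_i)\leq 2m\sqrt{K}\coth(\sqrt{K}\,r)\leq \frac{2m}{r}+2m\sqrt{K}.
\end{align*}

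Finally, writing $\Delta_b r=\sum_i\bigl[e_i(e_i r)-(\nabla_{e_i}e_i)r\bigr]$ and $\mathrm{Hess}^{LC}(r)(e_i,e_i)=e_i(e_i r)-(\nabla^{LC}_{e_i}e_i)r$, the discrepancy
\begin{align*}
\Delta_b r-\sum_{i=1}^{2m}\mathrm{Hess}^{LC}(r)(e_i,e_i)=\sum_{i=1}^{2m}D(e_i,e_i)\,r
\end{align*}
is bounded in absolute value by $C_m(1+k_1)$, since $|\nabla r|_{g_\theta}=1$ outside the cut locus and $|D|\leq C_m(1+k_1)$. Combining the previous two displays gives $\Delta_b r\leq C(1/r+\sqrt{1+k+k_1+k_1^2})$, as claimed. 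The main obstacle is the bookkeeping in the first step: extracting exactly the combination $Ric_b+2(m-2)Tor_b$ as the leading term in $\mathrm{Ric}^{LC}$ on horizontal directions requires carefully tracking contributions from $D$, $\nabla D$, and the $J$-invariant decomposition of the Tanaka-Webster Ricci tensor, while being content with crude $O(1+|A|^2+|\nabla_b A|)$ bounds on the lower-order remainder; once that identity is in hand, the rest of the argument is mechanical.
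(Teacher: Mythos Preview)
The paper does not supply a proof of this statement at all; it is quoted verbatim as Theorem~2.2 with a citation to \cite{CDRZ} and is then used only through its consequence $\Delta_b\phi\geq -C/R$ for the cut-off function. So there is no in-paper argument to compare against.

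Your outline is a standard and viable route to such a comparison result: bound $\mathrm{Ric}^{LC}$ of the Webster metric $g_\theta$ below by $-C_m(1+k+k_1+k_1^2)$, apply the Riemannian Hessian comparison, then correct the horizontal trace by the difference tensor $D=\nabla^{LC}-\nabla$. The bound $|D|\le C_m(1+|A|)$ follows cleanly from the metric-connection identity $2g_\theta(D(X,Y),Z)=-g_\theta(T_\nabla(X,Y),Z)+g_\theta(T_\nabla(Y,Z),X)+g_\theta(T_\nabla(Z,X),Y)$ together with the explicit torsion $T_\nabla(X,Y)=2d\theta(X,Y)\xi$ and $T_\nabla(\xi,\cdot)=\tau$, so Step~3 is fine. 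The genuine work, as you acknowledge, is Step~1: extracting precisely $Ric_b+2(m-2)Tor_b$ as the leading horizontal part of $\mathrm{Ric}^{LC}$ requires the explicit Webster--Dragomir--Tomassini formulas relating the two curvature tensors, not merely a crude $|D|^2+|\nabla D|$ estimate; you have flagged this but not carried it out. One minor point to watch is that the Hessian comparison in the form you wrote needs the full Ricci lower bound, including the $\xi$-direction, where $\mathrm{Ric}^{LC}(\xi,\xi)$ involves $|A|^2$ but not $Ric_b$; this is harmless for the stated conclusion since it only contributes to the $k_1^2$ term.
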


     \section{Proof of Theorem 1.1}
     In this section, we assume that $(M^{2m+1},HM,J,\theta)$ is a pseudo-Hermitian manifold with
     \begin{align*}
        Ric_b +2(m-2)Tor_b\geq -k\quad and\quad |A|,|\nabla_b A|\leq k_1 ,\nonumber
    \end{align*}
    for some constants $k,k_1\geq 0$. In the following calculations, the universal constant $C$ might be changed from line to line. 
    \par
    Let $r(x)$ be the Riemannian distance function from $x\in M$ to a fix point $O$ and $B_R=B_R(O)$ is the Riemannian ball centered at $O$ with radius $R$. In this section, we assume that $R\geq 1$. Suppose that $u$ is a positive solution of (1.3) on $B_{2R}$ and $f(t)=e^{pt}$, then (1.3) is equivalent to the following equation
     \begin{align}
         \Delta_bu+auf(log\ u)=0, \quad on\ B_{2R}.
     \end{align}
     Set $w=log\ u$ and $F=|\nabla_b w|^2+bf$, where $b\geq 0$ is a constant to be determined later, we have
     \begin{align}
         \Delta_b w+F+(a-b)f(w)=0,\quad |\nabla_b w|^2=F-bf(w).
     \end{align} 
      Choose a $ \phi=\varphi(\frac{r}{R})$, where $\varphi$ is a cut-off function such that
    \begin{eqnarray}
        \varphi|_{[0,1]}=1,\ \varphi|_{[2,\infty)}=0,\ -C|\varphi|^{\frac{1}{2}}\leq \varphi^{'}\leq 0,\ \varphi^{''}\geq -C.\nonumber
     \end{eqnarray}
     Furthermore, $\phi$ satisfies that 
     \begin{align*}
         \frac{|\nabla_b \phi|^2}{\phi}\leq \frac{C}{R^2},\quad \Delta_b \phi\geq -\frac{C}{R}.
     \end{align*}
     
     \par
     We define the auxiliary real-valued function by 
     \begin{align*}
         \Phi=F+\mu \phi|w_0|^2=|\nabla_b w|^2+bf(w)+\mu \phi|w_0|^2,
     \end{align*}
     where $\mu>0$ is a constant depends on $R$. From Lemma 2.1, we derive the following estimates.
      \begin{lemma} 
      Suppose $k_1\mu \leq  1$ and $\phi(x),\Phi(x)\neq 0$, then at $x$, we have
    \begin{align}
        \Delta_b \Phi \geq & \frac{1}{m}F^2-2\langle \nabla_b w ,\nabla_b \Phi\rangle -4k_1\mu\phi|\nabla_b w|^2|w_0|+2\mu |w_0|^2\langle \nabla_b \phi,\nabla_b w\rangle  \nonumber
        \\
        &+[(b-2a)f^{'}+bf^{''}-\frac{2}{m}(b-a)f-(2k+16(\mu\phi)^{-1}+2k_1\mu\phi)]F \nonumber
        \\
        &+\frac{(b-a)^2}{m}f^2+(2k+16(\mu\phi)^{-1}+2k_1\mu\phi)bf-b(b-a)f^{'}f-b^2f^{''}f \nonumber
        \\
        &+(4m-4k_1\mu\phi-2a\mu\phi f^{'}+\mu\Delta_b\phi-4\mu\frac{|\nabla_b\phi|^2}{\phi})|w_0|^2.
    \end{align}
     \end{lemma}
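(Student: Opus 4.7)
The plan is to combine the two CR Bochner inequalities in Lemma 2.1 applied to $w=\log u$ with the equation (3.2), and to keep careful track of all cross terms generated by the auxiliary weight $\mu\phi w_0^2$.

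First, I would split
$$\Delta_b\Phi \;=\; \Delta_b|\nabla_b w|^2 \;+\; b\,\Delta_b f(w) \;+\; \mu\,\Delta_b(\phi w_0^2).$$
For the first summand, apply (2.3) to $w$ with the parameter $\epsilon_1=\mu\phi$. For the third summand, use the product rule
$$\Delta_b(\phi w_0^2)=w_0^2\Delta_b\phi+4w_0\langle\nabla_b\phi,\nabla_b w_0\rangle+\phi\,\Delta_b w_0^2,$$
bound the last term by $\phi$ times (2.4), and apply Cauchy--Schwarz on the cross term to get $4w_0\langle\nabla_b\phi,\nabla_b w_0\rangle\geq -\phi|\nabla_b w_0|^2-4w_0^2|\nabla_b\phi|^2/\phi$. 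The choice $\epsilon_1=\mu\phi$ is made precisely so that the resulting $-\mu\phi|\nabla_b w_0|^2$ term cancels the $-\epsilon_1|\nabla_b w_0|^2$ produced by (2.3); the $+2\mu\phi|\nabla_b w_0|^2$ coming from (2.4) then survives as nonnegative and is discarded. The two $|\pi_{1,1}^\perp\nabla_b d_b w|^2$ contributions combine as $(2-2k_1\mu\phi)|\pi_{1,1}^\perp\nabla_b d_b w|^2$, which is nonnegative under the standing hypothesis $k_1\mu\le 1$ (since $\phi\le 1$) and can also be dropped.

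Next, I would substitute (3.2), namely $\Delta_b w=-F+(b-a)f$ and $|\nabla_b w|^2=F-bf$, everywhere. Expanding
$$\tfrac{1}{m}(\Delta_b w)^2 \;=\; \tfrac{1}{m}F^2 \;-\; \tfrac{2(b-a)}{m}fF \;+\; \tfrac{(b-a)^2}{m}f^2$$
accounts for the $\tfrac1m F^2$ and $\tfrac{(b-a)^2}{m}f^2$ terms in (3.3). The chain rule gives $\Delta_b f(w)=f'(w)\Delta_b w+f''(w)|\nabla_b w|^2$, and after replacing $\Delta_b w$ and $|\nabla_b w|^2$ this yields the $bf''F$, $-bf'F$, $b(b-a)f'f$, and $-b^2f''f$ contributions.

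For the $-2\langle\nabla_b\Delta_b w,\nabla_b w\rangle$ term in (2.3), I would use $\nabla_b\Delta_b w=-\nabla_b F+(b-a)f'\nabla_b w$ and the identity $\nabla_b F=\nabla_b\Phi-\mu w_0^2\nabla_b\phi-2\mu\phi w_0\nabla_b w_0$. This produces the explicit $-2\langle\nabla_b w,\nabla_b\Phi\rangle$ and $+2\mu|w_0|^2\langle\nabla_b\phi,\nabla_b w\rangle$ terms in (3.3), a term $-4\mu\phi w_0\langle\nabla_b w_0,\nabla_b w\rangle$ which together with the $\xi$-direction remainder $-2\mu\phi\langle\nabla_\xi w,\nabla_\xi\Delta_b w\rangle$ from (2.4) is bounded below by $-4k_1\mu\phi|\nabla_b w|^2|w_0|-2a\mu\phi f'|w_0|^2$ via AM--GM and the commutator between $\xi$ and $\nabla_b$ (which contributes factors of $|A|\le k_1$), and the term $2(b-a)f'|\nabla_b w|^2=2(b-a)f'(F-bf)$ which combines with the earlier $-2(b-a)fF/m$ and the $bf''F-bf'F$ from Step 2 to produce the full coefficient $(b-2a)f'+bf''-\tfrac{2}{m}(b-a)f$ of $F$ in (3.3).

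The main obstacle is not any single estimate but the bookkeeping: the $|\nabla_b w_0|^2$ and $|\pi_{1,1}^\perp\nabla_b d_b w|^2$ terms must cancel without slack (forcing the calibration $\epsilon_1=\mu\phi$ and the constraint $k_1\mu\le 1$), and the $\xi$-direction Bochner remainder must be simplified using the equation together with torsion commutator identities in just the right way to isolate the explicit coefficient $-2a\mu\phi f'$ in front of $|w_0|^2$. Once all terms are sorted, collecting yields precisely (3.3).
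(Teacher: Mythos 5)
Your proposal follows essentially the same route as the paper: the same splitting of $\Delta_b\Phi$, the calibration $\epsilon_1=\mu\phi$ in (2.3), the product rule plus (2.4) for $\mu\Delta_b(\phi w_0^2)$ with Cauchy--Schwarz on the $\langle\nabla_b\phi,\nabla_b w_0^2\rangle$ cross term, substitution of (3.2), the commutation relation $\nabla_\xi\nabla_b w=\nabla_b w_0-\tau(\nabla_b w)$ with $|A|\le k_1$ to handle the $\xi$-direction remainder, and the dropping of the nonnegative $|\pi_{1,1}^\perp\nabla_b d_b w|^2$ contribution under $k_1\mu\le 1$, with coefficient bookkeeping that reproduces (3.3). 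One sentence is misstated: the three $|\nabla_b w_0|^2$ contributions are $-\mu\phi$, $-\mu\phi$ and $+2\mu\phi$, so the positive term from (2.4) is exactly consumed in cancelling the two negative ones rather than ``surviving and being discarded'' --- but since you list all three terms and they sum to zero, the argument is unaffected.
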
 
    \begin{proof}
    From (3.2), we have
    \begin{align*}
        \langle \nabla_\xi w,\nabla_\xi \Delta_b w\rangle=& \langle \nabla_\xi w,\nabla_\xi (-|\nabla_b w|^2-af(w))\rangle
        \\
        =&-2\langle w_0\nabla_\xi\nabla_b w,\nabla_b w\rangle-2af^{'}|w_0|^2
        \\
        =&-\langle \nabla_b|w_0|^2,\nabla_b w\rangle+2w_0A(\nabla_bw ,\nabla_bw)-2af^{'}|w_0|^2.
    \end{align*}
    In the last equality, we use the communication relation
    \begin{align*}
        \nabla_\xi \nabla_bw=\nabla_bw_0-\tau(\nabla_b w).
    \end{align*}
        Hence Lemma 2.1 yields that
        \begin{align}
        \Delta_b F\geq & \frac{1}{m}(\Delta_b w)^2+4mw^2_0+2|\pi_{1,1}^\perp \nabla_b d_b w|^2
              -2\langle \nabla_b \Delta_b w,
               \nabla_b w \rangle-\epsilon_1|\nabla_b w_0^2|
        \nonumber\\
        & -(2k+16\epsilon_1^{-1})|\nabla_b w|^2 +b(f^{''}|\nabla_b w|^2+f^{'}\Delta_bw)
    \end{align}
     and
     \begin{align}
         \Delta_bw^2_0\geq & 2|\nabla_b w_0|^2-2\langle \nabla_b|w_0|^2,\nabla_b w\rangle-4k_1|\nabla_b w|^2|w_0|-2k_1 |\pi_{1,1}^\perp \nabla_b d_b w|^2 \nonumber
         \\
         &-(4k_1+2af^{'})|w_0|^2-2k_1|\nabla_b w|^2.
     \end{align}
     Furthermore, we have 
     \begin{align}
         \Delta_b(\phi w^2_0)=&\phi \Delta_b w_0^2+2\langle \nabla_b \phi,\nabla_b w_0^2\rangle +w_0^2\Delta_b \phi \nonumber
         \\
         \geq& 2\langle \nabla_b \phi,\nabla_b w_0^2\rangle +w_0^2\Delta_b \phi+2\phi|\nabla_b w_0|^2-2\langle \nabla_bw,\nabla_b(\phi|w_0^2|)\rangle \nonumber
         \\
         &+2|w_0^2| \langle \nabla_b\phi,\nabla_b w\rangle -4k_1\phi|\nabla_b w|^2|w_0|-2k_1 \phi|\pi_{1,1}^\perp \nabla_b d_b w|^2 \nonumber
         \\
         &-(4k_1+2af^{'})\phi|w_0|^2-2k_1\phi|\nabla_b w|^2.
     \end{align}
     Combining with (3.4) and choosing $\epsilon_1=\mu \phi$, we derived that
     \begin{align}
         \Delta_b \Phi\geq &\frac{1}{m}(F-(b-a)f)^2-2\langle \nabla_bw,\nabla_b \Phi\rangle -2(a-b)f^{'}|\nabla_b w|^2\nonumber
         \\
         &+\mu\phi|\nabla_bw_0|^2+2\mu\langle \nabla_b \phi,\nabla_b|w_0|^2\rangle-4k_1\mu\phi|\nabla_b w|^2|w_0| \nonumber
         \\
         &-(2k+16(\mu\phi)^{-1}+2\mu k_1\phi-bf^{''})|\nabla_b w|^2+2(1-k\mu)|\pi_{1,1}^\perp \nabla_b d_b w|^2 \nonumber
         \\
         &+(4m-4k_1\mu\phi-2a\mu\phi f^{'}+\mu\Delta_b\phi)|w_0^2|+2\mu w_0^2\langle\nabla_b\phi,\nabla_b w\rangle +bf^{'}\Delta_b w.
     \end{align}
     Since $k_1\mu\leq 1$ and using (3.2), the proof is finished by 
     \begin{align}
         \mu\phi|\nabla_bw_0|^2+2\mu\langle \nabla_b \phi,\nabla_b|w_0|^2\rangle \geq -4\mu \frac{|\nabla_b \phi|^2}{\phi}|w_0|^2. \nonumber
     \end{align}
    \end{proof}
     Define $\phi\Phi=P+\mu Q,$ where $ P=\phi F, Q=\phi^2|w_0|^2$. We assume that $x_0$ is the maximum point of $\phi \Phi$ on $ B_{2R}$. Since $\phi \Phi=0$ on the $ \partial B_{2R}$, $x_0$ must lie in $B_{2R}$. Set $ P_0=P(x_0), Q_0=Q(x_0)$ and let $b=\lambda a$.
     
     \begin{lemma}
         If $\phi \Phi(x_0)\neq 0$ and $k_1\mu \leq\epsilon \leq  1$, we have
         \begin{align}
             0\geq &  (\frac{1}{m}-\epsilon)P_0^2+a\phi e^{pw}[(T+\lambda\frac{\epsilon}{2})P_0+Na\phi e^{pw}] \nonumber
             \\
             &-(2k+16\mu^{-1}+2\epsilon+\frac{C}{R})P_0+(4m-12\epsilon-2a\mu\phi f^{'}-\mu\frac{C}{R}-\mu\frac{C}{R}P_0^{\frac{1}{2}})Q_0,
         \end{align}
         where  
         \begin{align*}
             T=\lambda p^2+(\lambda-2)p-\frac{2}{m}(\lambda-1),\quad N=\frac{(\lambda-1)^2}{m}-\lambda(\lambda-1)p-\lambda^2p^2 . 
         \end{align*}
     \end{lemma}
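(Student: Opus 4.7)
The plan is to exploit the fact that $\phi\Phi$ attains an interior maximum at $x_0$, and then to convert the resulting second-derivative information, combined with the Bochner-type inequality (3.3), into the pointwise inequality in $P_0$ and $Q_0$ stated in (3.8).

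First, I would use the maximum-principle input. Since $\phi\Phi$ vanishes on $\partial B_{2R}$ but $(\phi\Phi)(x_0)\neq 0$, the point $x_0$ lies in the interior, so $\nabla_b(\phi\Phi)(x_0)=0$ and $\Delta_b(\phi\Phi)(x_0)\le 0$. The gradient identity rearranges to $\nabla_b\Phi=-(\Phi/\phi)\nabla_b\phi$ at $x_0$, which controls the two dangerous inner products $\langle\nabla_bw,\nabla_b\Phi\rangle$ and $\langle\nabla_b\phi,\nabla_b\Phi\rangle$. Expanding $\Delta_b(\phi\Phi)$ by the product rule and invoking the cut-off estimates $|\nabla_b\phi|^2/\phi\le C/R^2$, $\Delta_b\phi\ge -C/R$ (together with $R\ge 1$) produces the one-sided bound $\phi^2\Delta_b\Phi\le (P_0+\mu Q_0)\,C/R$.

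Next, I would feed the lower bound (3.3) into the left-hand side after multiplying by $\phi^2$. The leading term becomes $\phi^2 F^2/m=P_0^2/m$; the $|w_0|^2$ piece becomes a coefficient on $Q_0$ via $\phi^2|w_0|^2=Q$; and the terms involving $|\nabla_b w|^2$ collapse into multiples of $P_0$ through $\phi|\nabla_bw|^2\le P$ (which uses $bf\ge 0$) and $\phi|w_0|=Q^{1/2}$. Substituting $b=\lambda a$ together with $f(w)=e^{pw}$, $f'=pf$, $f''=p^2f$ into the coefficients of $F$ and $f^2$ in (3.3), elementary algebra identifies them as $afT$ and $a^2f^2N$ with $T,N$ precisely as claimed. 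The remaining linear-in-$F$ factor $-(2k+16(\mu\phi)^{-1}+2k_1\mu\phi)$ yields, after multiplication by $\phi^2$ and at $x_0$, a penalty of at most $-(2k+16\mu^{-1}+2\epsilon)P_0$ once $k_1\mu\le\epsilon$ is used, accounting for the corresponding coefficient in (3.8).

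Then I would dispose of the three cross terms. For $-2\langle\nabla_bw,\nabla_b\Phi\rangle$, substituting the gradient identity from the first step and applying Cauchy--Schwarz together with the cut-off bound gives a contribution controlled by $CR^{-1}(P_0+\mu Q_0)\sqrt{P_0}$, whose $P_0^{3/2}/R$ component is absorbed into the $C/R$ penalty on $P_0$ by Young, while the $Q_0\sqrt{P_0}/R$ component yields the $-\mu CR^{-1}\sqrt{P_0}\cdot Q_0$ term in (3.8). The term $2\mu|w_0|^2\langle\nabla_b\phi,\nabla_bw\rangle$ is handled identically. For $-4k_1\mu\phi|\nabla_bw|^2|w_0|$, I first bound $\phi^3|\nabla_bw|^2|w_0|\le \phi\,PQ^{1/2}$ at $x_0$ and then apply Young's inequality, calibrating the weight to $\epsilon/m$; combined with $k_1\mu\le\epsilon$ this absorption is what reduces the leading $1/m$ in front of $P_0^2$ to $1/m-\epsilon$ and what contributes the bulk of the $-12\epsilon$ loss in the $Q_0$ coefficient. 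The remaining $-4k_1\mu\phi$ and $-4\mu|\nabla_b\phi|^2/\phi$ contributions to the $|w_0|^2$ factor similarly yield the $-\mu C/R$ and residual $\epsilon$ losses. The small shift $+\lambda\epsilon/2$ in the coefficient of $P_0$ emerges from the analogous Young split applied to the positive cross term arising from $\phi^2(2k_1\mu\phi)\,bf$, once it is re-expressed using $\phi bf\le P_0$.

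The main obstacle is the bookkeeping: the three Young inequalities and the cut-off estimates each reshuffle some share of the $P_0^2$, $P_0$ and $Q_0$ coefficients, and the free parameters must be coordinated so that the three stated coefficients in (3.8) emerge in precisely the claimed form. In particular, only $\epsilon$ of the leading $1/m$ may be surrendered to the $P_0^2$ coefficient, while all remaining $\epsilon$-sized losses must land exclusively in the $P_0$ and $Q_0$ coefficients.
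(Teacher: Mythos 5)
Your overall route is exactly the paper's: interior maximum at $x_0$, hence $\nabla_b(\phi\Phi)(x_0)=0$ and $\Delta_b(\phi\Phi)(x_0)\le 0$; feed in the lower bound of Lemma 3.1; use the cut-off estimates and $k_1\mu\le\epsilon$; identify $T$ and $N$ by substituting $b=\lambda a$, $f'=pf$, $f''=p^2f$ into the coefficients of $F$ and of $f^2$; and dispose of the three cross terms by Cauchy--Schwarz/Young. All of that matches, and your accounting of the $\frac1m-\epsilon$, the $-(2k+16\mu^{-1}+2\epsilon+\frac{C}{R})P_0$ penalty, and the $Q_0$ coefficient is essentially the paper's.

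There is one step that fails as described: the origin of the $+\lambda\frac{\epsilon}{2}$ in the coefficient of $a\phi e^{pw}P_0$. You attribute it to a Young split of the term $\phi^2\,(2k_1\mu\phi)\,bf$ coming from Lemma 3.1. That term carries no factor of $P_0$, and since it sits on the favorable side of the inequality $\Delta_b\Phi\ge\cdots$ it may only be bounded \emph{below} (i.e.\ discarded); converting it into $\frac{\lambda\epsilon}{2}a\phi e^{pw}P_0$ would require $2k_1\mu\phi^2\ge\frac{\epsilon}{2}P_0$, which fails for large $P_0$. The actual source is the gradient cross term $2\phi\Phi\langle\nabla_b w,\nabla_b\phi\rangle$: in the Cauchy--Schwarz/Young step one must keep the exact identity $\phi|\nabla_b w|^2=P_0-\lambda a\phi e^{pw}$ rather than the one-sided bound $\phi|\nabla_b w|^2\le P_0$ (which is what your $\sqrt{P_0}$ uses), so that the Young step produces $-\frac{\epsilon}{2}P_0\cdot\phi(F-bf)=-\frac{\epsilon}{2}P_0^2+\frac{\lambda\epsilon}{2}a\phi e^{pw}P_0$ plus the $\frac{C}{R}$ remainder. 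As written, your argument proves (3.8) with $T$ in place of $T+\lambda\frac{\epsilon}{2}$, which is strictly weaker than the stated lemma when $\lambda a>0$ (the case $\lambda=1$, $a>0$ used in Case 1 of Theorem 1.1); it happens that the weaker version still suffices there because $T+N=-p\ge0$, but it is not the inequality claimed. Fixing the bookkeeping at that single Cauchy--Schwarz step recovers the lemma exactly.
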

     \begin{proof}
         From Lemma 3.1 and (3.2), we have
         \begin{align*}
             \phi\Delta_b(\phi\Phi)\geq & \frac{1}{m}P^2+(\Delta_b\phi-2\frac{|\nabla_b \phi|^2}{\phi})\phi \Phi+2\langle\nabla_b \phi,\nabla_b (\phi\Phi)\rangle-2\phi\langle\nabla_b w,\nabla_b (\phi\Phi)\rangle
             \\
             &+2\langle \nabla_b w,\nabla_b\phi\rangle \phi\Phi-4k_1\mu \phi^3|\nabla_b w|^2|w_0|+2\mu\phi^2|w_0|^2\langle \nabla_b \phi,\nabla_bw \rangle 
             \\
            &+[(b-2a)f^{'}+bf^{''}-\frac{2}{m}(b-a)f-(2k+16(\mu\phi)^{-1}+2k_1\mu\phi)]\phi P \nonumber
        \\
        &+[\frac{(b-a)^2}{m}f^2+(2k+16(\mu\phi)^{-1}+2k_1\mu\phi)bf-b(b-a)f^{'}f-b^2f^{''}f]\phi^2 \nonumber
        \\
        &+(4m-4k_1\mu\phi-2a\mu\phi f^{'}+\mu\Delta_b\phi-4\mu\frac{|\nabla_b\phi|^2}{\phi})Q.
         \end{align*}
         Note that $x_0$ is the maximum point and $x_0\notin \partial B_{2R}$, we have
         \begin{align*}
             \nabla_b(\phi\Phi)(x_0)=0,\quad \Delta_b (\phi\Phi)(x_0)\leq 0.
         \end{align*}
         By the definition of $\phi$, at $x_0$, we have 
         \begin{align}
             0\geq &  \frac{1}{m}P_0^2-\frac{C}{R}(P_0+\mu Q_0)-4k_1\mu \phi|\nabla_b w|^2\phi |w_0|-2\mu \phi^2|w_0|^2|\nabla_b w|\phi^{\frac{1}{2}}\frac{|\nabla_b \phi|}{\phi^{\frac{1}{2}}} \nonumber
             \\
            &+[(b-2a)f^{'}+bf^{''}-\frac{2}{m}(b-a)f-(2k+16(\mu\phi)^{-1}+2k_1\mu\phi)]\phi P_0 \nonumber
        \\
        &+[\frac{(b-a)^2}{m}f^2+(2k+16(\mu\phi)^{-1}+2k_1\mu\phi)bf-b(b-a)f^{'}f-b^2f^{''}f]\phi^2 \nonumber
        \\
        &+(4m-4k_1\mu\phi-2a\mu\phi f^{'}-\mu\frac{C}{R})Q_0-2|\nabla_b w|^{\frac{1}{2}}|\nabla_b\phi|(P_0+\mu Q_0).
         \end{align}
         Since $P=\phi F\geq \phi|\nabla_b w|^2$ and $R\ge 1$, using Cauchy inequality, we derive that 
         \begin{align*}
             2|\nabla_b w|^{\frac{1}{2}}|\nabla_b\phi|(P_0+\mu Q_0)\leq & 2 |\phi\nabla_bw|^{\frac{1}{2}}\frac{|\nabla_b\phi|}{\phi^{\frac{1}{2}}}(P_0+\mu Q_0)
             \\
             \leq & \frac{C}{R}P_0^{\frac{1}{2} } Q_0+2 \phi^{\frac{1}{2}}|F-bf|^{\frac{1}{2}}\frac{|\nabla_b\phi|}{\phi^{\frac{1}{2}}}P_0
             \\
             \leq &\frac{C}{R}P_0^{\frac{1}{2} } Q_0+\frac{\epsilon}{2}P_0\phi (F-bf),
             \\
             4k_1\mu \phi|\nabla_b w|^2\phi |w_0|\leq & \frac{\epsilon}{2}P_0^2+8\epsilon Q_0,
             \\
             2\mu \phi^2|w_0|^2|\nabla_b w|\phi^{\frac{1}{2}}\frac{|\nabla_b \phi|}{\phi^{\frac{1}{2}}}\leq & \frac{C}{R}P_0^{\frac{1}{2} } Q_0.
         \end{align*}
         Plugging the above into (3.9), we have 
         \begin{align*}
             0\geq &  (\frac{1}{m}-\epsilon)P_0^2+\frac{\epsilon }{2}\phi P_0\lambda a f+[\lambda p^2+(\lambda-2)p-\frac{2}{m}(\lambda-1)]ae^{pw}\phi P_0
             \\
             &-(2k+16\mu^{-1}+2\epsilon+\frac{C}{R})P_0+[\frac{(\lambda-1)^2}{m}-\lambda(\lambda-1)p-\lambda^2p^2]a^2\phi^2e^{2pw}
             \\
             &+(4m-12\epsilon-2a\mu\phi f^{'}-\mu\frac{C}{R}-\mu\frac{C}{R}P_0^{\frac{1}{2}})Q_0.
         \end{align*}
         This completes the proof.
     \end{proof}
     Now we can give the proof of Theorem 1.1.
     
     ~\\
     $\mathbf{Proof\ of\ Theorem\ 1.1.}$ 
     \par
     As above, we first consider the equation (1.3) on $B_{2R}$. Suppose that the maximum value $\phi\Phi(x_0)>0$, otherwise the conclusion is trivial. According to the sign of $a$ and $p$, we study the following two cases.

     ~\\
     $\mathbf{Case\ 1}$: $a>0$ and $p\leq 0$. In this case, we choose $\lambda=1$. Hence 
     \begin{align*}
        b=a,\quad T=p^2-p,\quad N=-p^2, \quad 2a\mu\phi f^{'}\leq 0.
     \end{align*}
     Since $N\leq 0 $ and $ P_0\geq \phi ae^{pw}$, we find that $Na\phi e^{pw}\geq N P_0.$ Moreover, 
     \begin{align*}
         (T+\frac{\epsilon}{2})P_0+Na\phi e^{pw}\geq (T+\frac{\epsilon}{2}+N )P_0=(\frac{\epsilon}{2}-p)P_0\geq 0.
     \end{align*}
     Therefore, by Lemma 3.2, we have
         \begin{align}
             0\geq &  (\frac{1}{m}-\epsilon)P_0^2 
            -(2k+16\mu^{-1}+2\epsilon+\frac{C}{R})P_0+(4m-12\epsilon-\mu\frac{C}{R}-\mu\frac{C}{R}P_0^{\frac{1}{2}})Q_0.
         \end{align}
    Let 
    \begin{align*}
        q=\max\limits_{B_{2R} } P \geq P_0, \quad \epsilon=\frac{1}{6m},\quad \mu^{-1}=6m k_1+1+2(\frac{C}{R}+\frac{C}{R}q^{\frac{1}{2}}).
    \end{align*}
    This yields that 
    \begin{align*}
        4m-12\epsilon-\mu\frac{C}{R}-\mu\frac{C}{R}P_0^{\frac{1}{2}}\geq m, \quad k_1\mu \leq \epsilon.
    \end{align*}
    Then (3.10) induces that 
    \begin{align*}
        0\geq \frac{5}{6m}P_0^2-C(1+k+k_1+\frac{C}{R}+\frac{C}{R}q^{\frac{1}{2}})P_0+mQ_0.
    \end{align*}
    Then we can obtain 
    \begin{align}
        P_0\leq & C(1+k+k_1+\frac{C}{R}+\frac{C}{R}q^{\frac{1}{2}}),
        \\ 
        Q_0\leq & C(1+k+k_1+\frac{C}{R}+\frac{C}{R}q^{\frac{1}{2}})^2.
    \end{align}
    Since the definition of $\mu $ yields that 
    \begin{align*}
        \mu (1+k+k_1+\frac{C}{R}+\frac{C}{R}q^{\frac{1}{2}})\leq C,
    \end{align*}
    we have 
    \begin{align*}
        q=\max\limits_{B_{2R} } P\leq \phi\Phi(x_0)=P_0+\mu Q_0\leq C(1+k+k_1+\frac{C}{R}+\frac{C}{R}q^{\frac{1}{2}}).
    \end{align*}
    In particular, we find that 
    \begin{align*}
        q\leq \frac{C}{R}+\frac{C}{R}q^{\frac{1}{2}}.
    \end{align*}
    Hence for any $R\geq 1$, we have 
    \begin{align*}
        \max\limits_{B_{2R} } P\leq C, \quad \max\limits_{B_{2R} } Q\leq C,
    \end{align*}
    where $C$ depends on $m,k_1,k.$ Therefore we can conclude that $ P, Q$ are actually uniformly bounded on $M$. Now we can choose $\epsilon\in (0,\frac{1}{6m}), s>0$ and define 
   \begin{align*}
       \mu^{-1}=k_1\epsilon^{-1}+s+\frac{C}{R}
   \end{align*}
   such that
   \begin{align*}
        4m-12\epsilon-\mu\frac{C}{R}-\mu\frac{C}{R}P_0^{\frac{1}{2}}\geq m, \quad k_1\mu \leq \epsilon.
    \end{align*}
    Then (3.10) becomes 
     \begin{align}
             0\geq &  (\frac{1}{m}-\epsilon)P_0^2 
            -(\lambda_s+\frac{C}{R})P_0+mQ_0,
         \end{align}
         where $\lambda_s=2k+2\epsilon+16s+16k_1\epsilon^{-1}.$ Hence (3.13) implies that 
         \begin{align*}
             P_0\leq  \frac{\lambda_s+\frac{C}{R}}{\frac{1}{m}-\epsilon},\quad 
             Q_0\leq  \frac{(\lambda_s+\frac{C}{R})^2}{4m(\frac{1}{m}-\epsilon)}.
         \end{align*}
         Then for $a>0,p\leq 0$, there holds
         \begin{align*}
             \max\limits_{B_R} (P+\mu Q)\leq \frac{\lambda_s+\frac{C}{R}}{\frac{1}{m}-\epsilon}+\mu \frac{(\lambda_s+\frac{C}{R})^2}{4m(\frac{1}{m}-\epsilon)}.
         \end{align*}
         Letting $R\xrightarrow{} \infty, $ we obtain (1.4).
         
     ~\\
     $\mathbf{Case\ 2}$: (1) $a<0,p>0$. In this case, we choose $\lambda=0.$ Hence
     \begin{align*}
         b=0,\quad T=\frac{2}{m}-2p, \quad N=\frac{1}{m},\quad 2a\mu \phi f^{'}\leq 0.
     \end{align*}
     For $p\geq \frac{1}{m}$, we have $T\leq 0$. Note that $a<0$, hence
     \begin{align*}
         a\phi e^{pw}(TP_0+Na\phi e^{pw})\geq 0.
     \end{align*}
     It follows that 
     \begin{align}
             0\geq &  (\frac{1}{m}-\epsilon)P_0^2 
            -(2k+16\mu^{-1}+2\epsilon+\frac{C}{R})P_0+(4m-12\epsilon-\mu\frac{C}{R}-\mu\frac{C}{R}P_0^{\frac{1}{2}})Q_0,
         \end{align}
    which is the same as in the case $a>0,p\leq 0$. By the same method, we obtain (1.5).
    \par
    For $0< p < \frac{1}{m}$, we have $ T>0$. By Cauchy inequality, we get
    \begin{align*}
         a\phi e^{pw}(TP_0+Na\phi e^{pw})= (\frac{1}{\sqrt{N}}a\phi e^{pw})^2+a\phi e^{pw}TP_0\geq -\frac{T^2}{4N}P^2.
    \end{align*}
    Observe that 
    \begin{align*}
       \frac{T^2}{4N}=m(\frac{1}{m}-p)^2<\frac{1}{m},
    \end{align*}
    hence we have the following
    \begin{align}
             0\geq &  (\frac{1}{m}-\epsilon-\frac{T^2}{4N})P_0^2 
            -(2k+16\mu^{-1}+2\epsilon+\frac{C}{R})P_0 \nonumber
            \\
            &+(4m-12\epsilon-\mu\frac{C}{R}-\mu\frac{C}{R}P_0^{\frac{1}{2}})Q_0, \nonumber
         \end{align}
        by requiring that $ \epsilon< \frac{1}{m}-\frac{T^2}{4N}$. Using the same method in the case $a>0,p\leq 0$, we obtain (1.6).

       \section{ Acknowledgments}
        The author would like to thank Professor Yuxin Dong and Professor Xiaohua Zhu for their continued support and encouragement.

\bibliographystyle{siam}
\bibliography{ref}

\end{document}